\newtheorem{prethm}{{\bf Theorem}}
\newenvironment{thm}{\begin{prethm}\sl{\hspace{-0.5
               em}{\bf.}}}{\end{prethm}}
\newtheorem{prepro}[prethm]{{\bf Proposition}}
\newtheorem{prelem}[prethm]{{\bf Lemma}}
\newenvironment{lem}{\begin{prelem}\sl{\hspace{-0.5
               em}{\bf.}}}{\end{prelem}}
\newtheorem{predeff}[prethm]{{\bf Definition}}
\newtheorem{precor}[prethm]{{\bf Corollary}}
\newenvironment{cor}{\begin{precor}\sl{\hspace{-0.5
               em}{\bf.}}}{\end{precor}}
\newtheorem{preconj}[prethm]{{\bf Conjecture}}
\newenvironment{conj}{\begin{preconj}\sl{\hspace{-0.5
               em}{\bf.}}}{\end{preconj}}
\newtheorem{preremark}[prethm]{{\bf Remark}}
\newenvironment{remark}{\begin{preremark}\rm{\hspace{-0.5
               em}{\bf.}}}{\end{preremark}}
\newtheorem{preexample}[prethm]{{\bf Example}}
\newtheorem{preproof}{{\bf\textsf{Proof.}}}
\newenvironment{proof}[1]{\begin{preproof}{\rm
               #1}\hfill{$\Box$}}{\end{preproof}}
\newcommand{\la}{\lambda}
\newcommand{\x}{{\bf x}}
\newcommand{\y}{{\bf y}}
\newcommand{\mul}{{\rm mult}}
\title{  Spectral properties of cographs and $P_5$-free graphs}
\author{{\sc Ebrahim Ghorbani} \\[.3cm]
{\sl Department of Mathematics, K.N. Toosi University of Technology,}\\
{\sl P. O. Box 16315-1618, Tehran, Iran}\\
{\sl School of Mathematics, Institute for Research in Fundamental
Sciences (IPM),}\\
{\sl P.O. Box
19395-5746, Tehran, Iran }
\\[.3cm]
$\mathsf{e\_ghorbani@ipm.ir}$ }
\begin{document}
\maketitle

\vspace{5mm}

\begin{abstract}
A cograph is a simple graph which contains no path on 4 vertices as an induced subgraph.
We consider the eigenvalues of adjacency matrices of cographs and prove that a graph $G$ is a cograph if and only if no induced subgraph of $G$ has an eigenvalue in the interval $(-1,0)$.
It is also shown that  the multiplicity of any eigenvalue of a cograph $G$ does not exceed the sum of multiplicities of $0$ and $-1$ as eigenvalues of $G$.
We introduce a partial order on the vertex set of graphs $G$ in terms of inclusions among the open and closed neighborhoods of vertices,
and conjecture that the multiplicity of any eigenvalue of a cograph $G$  except for $0,-1$ does not exceed  the maximum size of an antichain with respect to that partial order. In two extreme cases (in particular for threshold graphs), the conjecture is shown to be true.
Finally, we give a simple proof for the result that bipartite $P_5$-free graphs have no eigenvalue in the intervals $(-1/2,0)$ and $(0,1/2)$.

\vspace{5mm}
\noindent {\bf Keywords:}  Cograph, Adjacency Matrix, Eigenvalue, Multiplicity, Threshold graph, $P_5$-free graph \\[.1cm]
\noindent {\bf AMS Mathematics Subject Classification\,(2010):}   05C50, 05C75
\end{abstract}

\vspace{5mm}

\section{Introduction}

A cograph is a simple graph which contains no path on four vertices as an induced subgraph.
The family of cographs is the
smallest class of graphs that includes the single-vertex graph and is closed under
complementation and disjoint union.
`Cograph' stands for `complement reducible graph,' a name which was coined in \cite{clb}.
However, this family of graphs were initially defined under different names \cite{j,l,se,su} and since then have been intensively studied.
It is well known that any cograph has a canonical tree representation, called a cotree.
This tree decomposition scheme of cographs is a particular case of the modular decomposition \cite{g} that applies to arbitrary graphs.
Partly because of this property, cographs are interesting from the algorithmic perspective (see \cite{bls}).
As pointed out in \cite{sa}, cographs have numerous applications in areas like parallel computing \cite{noz} or even biology \cite{gkbc} as they can be used to model series-parallel decompositions. For an account on different characterization and properties of cographs see \cite{bls}.

Cographs have also been studied from an algebraic perspective. Based on a computer search, Sillke \cite{si} conjectured that
 the rank of the adjacency matrix of any cograph is equal to the number of distinct
non-zero rows in this matrix. The conjecture was proved by Royle \cite{roy}. Since then alternative proofs and extensions of this result have appeared \cite{bss,chy,htw,sa}.

In this paper we explore further properties of the eigenvalues of the adjacency matrix of a cograph.
We present a new characterization of cographs, namely a graph $G$ is a cograph if and only if no induced subgraph of $G$ has an eigenvalue in the interval $(-1,0)$.  We also show that  the multiplicity of any eigenvalue of a cograph $G$ does not exceed
the total number of duplication and coduplication classes of $G$ (see Section~\ref{pre} for definition) which is not greater than
the sum of multiplicities of $0$ and $-1$ as eigenvalues of $G$. We introduce a partial order on the vertex set of graphs $G$ in terms of inclusions among the open and closed neighborhoods of vertices.
We conjecture that the multiplicity of any eigenvalue of a cograph $G$  except for $0,-1$ does not exceed  the maximum size of an antichain  with respect to that partial order.  We prove the conjecture in two extreme cases: when all vertices are comparable with respect to the partial order (i.e.  the graph is a threshold graph), and when no two vertices are comparable. As a natural extension of $P_4$-graphs, we consider $P_5$-free graphs in Section 6 and show that bipartite $P_5$-free graphs have no eigenvalue in the intervals $(-1/2,0)$ and $(0,1/2)$.

\section{Preliminaries}\label{pre}

In this section we introduce the notation and recall  a basic result  which will be used frequently.
The graphs we consider are all simple and undirected.
For a  graph $G$, we denote  by $V(G)$ the vertex set of $G$. The {\em order} of $G$ is $|V(G)|$.
For two vertices $u,v$, by $u\sim v$ we mean $u$ and $v$ are adjacent.
 If  $V(G)=\{v_1, \ldots , v_n\}$, then the {\em adjacency matrix} of $G$ is an $n \times  n$
 matrix $A(G)$ whose $(i, j)$-entry is $1$ if $v_i\sim v_j$ and  $0$ otherwise.
 By {\em eigenvalues} and {\em rank} of $G$ we mean those of $A(G)$.
 The multiplicity of an eigenvalue $\la$ of $G$ is denoted by $\mul(\la,G)$.
For a vertex $v$ of $G$, let $N_G(v)$ denote the {\em open neighborhood} of $v$, i.e.   the set of
all vertices of $G$ adjacent to $v$ and $N_G[v]=N_G(v)\cup\{v\}$ denote the {\em closed neighborhood} of $v$; we will drop
the subscript $G$  when it is clear from the context.
Two vertices $u$ and $v$ of $G$ are called {\em duplicate} if $N(u)=N(v)$
and called {\em coduplicate} if $N[u]=N[v]$. Note that duplicate vertices cannot be
adjacent and coduplicate vertices must be adjacent. Also, as in \cite{clb}, we say that $u$ and $v$ are {\em siblings} if they are either duplicates or coduplicates.
A maximal subset $S$ of  $V(G)$ with $|S|>1$   such that $N(u)=N(v)$
for any $u, v\in  S$  is called  a  {\em duplication  class} of $G$.  Coduplication  classes and sibling classes are defined analogously.
If $X\subset V(G)$, we use the notation $G-X$ to mean the subgraph of $G$ induced by $V(G)\setminus X$.

An important subclass of cographs are {\em threshold graphs}. These are the graphs which are both a cograph and a split graph (i.e. their vertex sets can be partitioned into a clique and an independent set). For more information see \cite{bls,mp}.

We will make use of the interlacing property of graph eigenvalues which we recall below (see \cite[Theorem~2.5.1]{bh}).
\begin{lem}\label{inter}
Let $G$ be a graph of order $n$, $H$ be an induced subgraph of $G$ of order $m$, $\la_1\ge\cdots\ge\la_n$ and $\mu_1\ge\cdots\ge\mu_m$ be the eigenvalues of $G$ and $H$, respectively. Then $$\la_i\ge\mu_i\ge\la_{n-m+i}~~\hbox{for}~ i=1, \ldots,m.$$
In particular, if $m=n-1$, then
 $$\la_1\ge\mu_1\ge\la_2\ge\mu_2\ge\cdots\ge\la_{n-1}\ge\mu_{n-1}\ge\la_n.$$
\end{lem}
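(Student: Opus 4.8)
The plan is to recognize the statement as the Cauchy interlacing theorem in disguise. Since $H$ is an \emph{induced} subgraph of $G$, its adjacency matrix $A(H)$ is obtained from $A(G)$ simply by deleting the $n-m$ rows and columns indexed by the vertices in $V(G)\setminus V(H)$; hence $A(H)$ is a principal $m\times m$ submatrix of the real symmetric matrix $A(G)$. After relabelling the vertices so that those of $H$ come first, I would derive both chains of inequalities from the Courant--Fischer variational characterization of the eigenvalues of a symmetric matrix.

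First I would set up a bridge between the two Rayleigh quotients. Let $W\subseteq\mathbb{R}^n$ be the $m$-dimensional coordinate subspace of vectors whose last $n-m$ entries vanish. Identifying $\mathbb{R}^m$ with $W$, one checks that $\x^{\top}A(G)\,\x=\x^{\top}A(H)\,\x$ for every $\x\in W$, since the deleted coordinates contribute nothing to the quadratic form. Consequently the numbers $\mu_1\ge\cdots\ge\mu_m$ are exactly the critical values of the Rayleigh quotient of $A(G)$ when it is restricted to $W$.

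For the upper bound $\la_i\ge\mu_i$ I would invoke the max--min form
$$\mu_i=\max_{\substack{U\subseteq W\\ \dim U=i}}\ \min_{\x\in U,\,\x\ne\mathbf0}\frac{\x^{\top}A(G)\,\x}{\x^{\top}\x},$$
and observe that here the maximum ranges only over those $i$-dimensional subspaces contained in $W$; dropping the restriction $U\subseteq W$ enlarges the feasible family and can only increase the maximum, while the unconstrained value is exactly $\la_i$. For the lower bound $\mu_i\ge\la_{n-m+i}$ I would instead use the min--max form
$$\mu_i=\min_{\substack{U\subseteq W\\ \dim U=m-i+1}}\ \max_{\x\in U,\,\x\ne\mathbf0}\frac{\x^{\top}A(G)\,\x}{\x^{\top}\x},$$
and compare it with the corresponding expression for $A(G)$: the $j$-th largest eigenvalue of an $n\times n$ symmetric matrix is the min--max over subspaces of dimension $n-j+1$, so taking $n-j+1=m-i+1$, i.e. $j=n-m+i$, identifies $\la_{n-m+i}$ as the same minimum taken over \emph{all} $(m-i+1)$-dimensional subspaces of $\mathbb{R}^n$. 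Enlarging the feasible family now only decreases the minimum, which yields $\la_{n-m+i}\le\mu_i$.

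The one point requiring care is exactly this index bookkeeping in the second inequality: one must correctly match the codimension $m-i+1$ appearing in the min--max description of $\mu_i$ with the eigenvalue index $n-m+i$ of $A(G)$. Beyond that the argument is routine, resting only on the principle that restricting an optimization to a subfamily of subspaces pushes a maximum down and a minimum up. Finally, the special case $m=n-1$ follows by specialization: the two inequalities read $\la_i\ge\mu_i$ and $\mu_i\ge\la_{i+1}$ for $i=1,\dots,n-1$, and interleaving them gives the displayed alternating chain.
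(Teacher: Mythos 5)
Your proof is correct. The paper does not actually prove this lemma --- it simply quotes it from \cite[Theorem~2.5.1]{bh} --- and your Courant--Fischer argument is precisely the standard proof of Cauchy interlacing given there: the identification of $A(H)$ as a principal submatrix, the max--min bound for $\la_i\ge\mu_i$, and the index matching $n-j+1=m-i+1$ for the lower bound are all handled correctly.
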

From the case of equality in interlacing (see \cite[Theorem~2.5.1]{bh}) the following can be deduced.
\begin{lem}\label{eqinter} If in Lemma~\ref{inter}, we have $\la_i=\mu_i$ or $\mu_i=\la_{n-m+i}$ for some $1\le i\le m$, then $A(H)$ has an eigenvector $\x$ for $\mu_i$, such that  $\begin{pmatrix}\bf0 \\ \x\end{pmatrix}$, where the entries of the $\bf0$ vector correspond to vertices in $V(G)\setminus V(H)$, is an eigenvector of $A(G)$ for the eigenvalue $\mu_i$.
\end{lem}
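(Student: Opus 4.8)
The plan is to forget the graph structure and prove the corresponding statement for an arbitrary real symmetric matrix and a principal submatrix, since $A(H)$ is exactly the principal submatrix of $A(G)$ indexed by $V(H)$. I would order the rows and columns of $A(G)$ so that the indices in $V(G)\setminus V(H)$ come first and those in $V(H)$ come last, and introduce the isometric embedding $S\colon\mathbb{R}^m\to\mathbb{R}^n$ that pads a vector with $n-m$ zeros on top, so that $S^TS=I_m$, $A(H)=S^TA(G)S$, and $S\x=\begin{pmatrix}\mathbf 0\\ \x\end{pmatrix}$ for every $\x$. In this language the assertion is precisely that $S\x$ is an eigenvector of $A(G)$ for a suitable eigenvector $\x$ of $A(H)$. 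I would also record at the outset that it suffices to treat the case $\mu_i=\la_i$: the case $\mu_i=\la_{n-m+i}$ follows by running the same argument with $A(G)$ and $A(H)$ replaced by $-A(G)$ and $-A(H)$, whose eigenvalues are the negatives in reverse order, so that (with index $m-i+1$) the lower interlacing equality for $A(G)$ turns into the upper one for $-A(G)$.

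Next I would recover the interlacing inequality through the Courant--Fischer characterization, while retaining enough information to analyze equality. Fix orthonormal eigenvectors $v_1,\dots,v_n$ of $A(G)$ (for $\la_1,\dots,\la_n$) and $u_1,\dots,u_m$ of $A(H)$ (for $\mu_1,\dots,\mu_m$). Consider the subspaces $\mathcal U=\mathrm{span}(Su_1,\dots,Su_i)$ and $\mathcal V=\mathrm{span}(v_i,\dots,v_n)$ of $\mathbb{R}^n$; since $S$ is injective, $\dim\mathcal U=i$ and $\dim\mathcal V=n-i+1$, so they share a nonzero vector, necessarily of the form $w=S\x$ with $0\neq\x\in\mathrm{span}(u_1,\dots,u_i)$. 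Because $w\in\mathcal V$ and $\x$ lies in the span of the top $i$ eigenvectors of $A(H)$, the Rayleigh quotients satisfy
\[
\la_i\ \ge\ \frac{w^TA(G)w}{w^Tw}\ =\ \frac{\x^TA(H)\x}{\x^T\x}\ \ge\ \mu_i ,
\]
where the middle equality uses $A(H)=S^TA(G)S$ and $S^TS=I_m$.

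The heart of the matter is the equality case. Assuming $\la_i=\mu_i$, the displayed chain collapses to equalities. The left one says that $w$, lying in $\mathrm{span}(v_i,\dots,v_n)$, attains the value $\la_i=\max\{\la_i,\dots,\la_n\}$ as its Rayleigh quotient; writing $w=\sum_{j\ge i}c_jv_j$ and comparing $\sum_{j\ge i}\la_jc_j^2$ with $\la_i\sum_{j\ge i}c_j^2$ forces $c_j=0$ whenever $\la_j<\la_i$, so $w$ lies in the $\la_i$-eigenspace, that is $A(G)w=\la_i w$. Since $w=S\x=\begin{pmatrix}\mathbf 0\\ \x\end{pmatrix}$ and $\la_i=\mu_i$, this is exactly the eigenvector of $A(G)$ claimed. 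The same argument applied to the right-hand equality (now using that $\x$ lies in the span of the top $i$ eigenvectors of $A(H)$) yields $A(H)\x=\mu_i\x$, so $\x$ is the required eigenvector of $A(H)$, finishing the proof.

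I expect the only subtle point to be this final extremality step, namely arguing that a Rayleigh quotient meeting the boundary value $\la_i$ on $\mathrm{span}(v_i,\dots,v_n)$ actually forces the vector into a single eigenspace; this is where repeated eigenvalues must be handled with care, although the elementary comparison above settles it. The remaining bookkeeping, that the intersection vector is genuinely of the form $S\x$ and nonzero so that passing between $\mathbb{R}^n$ and $\mathbb{R}^m$ is legitimate, is routine once $S$ is set up as an isometry, and the reduction to the single case $\mu_i=\la_i$ spares us from repeating the whole argument.
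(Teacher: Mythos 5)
Your proof is correct: the reduction of the case $\mu_i=\la_{n-m+i}$ to $\la_i=\mu_i$ by negation is valid, the dimension count $i+(n-i+1)>n$ produces a nonzero $w=S\x$, and the collapse of the Rayleigh-quotient chain correctly forces $w$ into the $\la_i$-eigenspace of $A(G)$ and $\x$ into the $\mu_i$-eigenspace of $A(H)$. The paper offers no proof of its own here, merely citing the equality case of interlacing from Brouwer--Haemers (Theorem~2.5.1), and your argument is precisely the standard Courant--Fischer proof of that cited result, so it matches the intended route while making it self-contained.
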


\section{A new characterization of cographs}

Several characterizations are known for cographs \cite{bls}. In this section, we present a new characterization of cographs which is based on graph eigenvalues.
We first state the following useful characterization of cographs (see \cite[Theorem~11.3.3]{bls}).

\begin{lem}{\label{dupcodup}} A graph $G$ is a cograph if and only if every induced subgraph of $G$ with more than one vertex has a pair of sibling vertices.
\end{lem}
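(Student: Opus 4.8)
The plan is to prove the two implications separately; the reverse direction reduces to a single computation, while the forward direction carries the structural content. For the ($\Leftarrow$) direction I would argue by contraposition. If $G$ is not a cograph, then by definition it contains an induced path $P_4$ on four vertices, say $a\sim b\sim c\sim d$; this $P_4$ is itself an induced subgraph of $G$ with more than one vertex, so it suffices to check that $P_4$ has neither a duplicate nor a coduplicate pair. A direct inspection of the open neighborhoods $N(a)=\{b\}$, $N(b)=\{a,c\}$, $N(c)=\{b,d\}$, $N(d)=\{c\}$ shows they are pairwise distinct, ruling out duplicate pairs, and likewise the closed neighborhoods $N[a],\dots,N[d]$ are pairwise distinct, ruling out coduplicate pairs. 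This exhibits the required induced subgraph with no twin pair.

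For the ($\Rightarrow$) direction, since the class of cographs is closed under taking induced subgraphs (deleting a vertex cannot create an induced $P_4$), it is enough to prove that every cograph on at least two vertices contains a duplicate or a coduplicate pair. I would proceed by induction on the order $n$, using the basic structural fact that for a cograph on $n\ge2$ vertices either $G$ or its complement $\overline G$ is disconnected. In the base case $n=2$ the graph is $K_2$ (two coduplicate vertices) or its complement (two duplicate vertices). For the inductive step, if $G$ is disconnected then either some component has at least two vertices, in which case the induction hypothesis supplies a twin pair inside that component, or all components are singletons, and then any two of them form a duplicate pair; in either subcase a twin pair of a component remains a twin pair in $G$, since its two vertices have no neighbors outside the component. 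If instead $G$ is connected, then $\overline G$ is disconnected and I would apply the previous case to $\overline G$.

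The step that needs the most care is the passage between $G$ and $\overline G$: one must verify that twins survive complementation with their type interchanged. From the identity $N_{\overline G}(u)=V(G)\setminus N_G[u]$ one reads off that $u$ and $v$ are duplicate in $\overline G$ exactly when they are coduplicate in $G$, and coduplicate in $\overline G$ exactly when they are duplicate in $G$. Hence a twin pair produced in $\overline G$ by the disconnected case yields a twin pair in $G$, completing the induction. The only genuine obstacle is establishing (or invoking) this union--join dichotomy for cographs; the remaining steps are elementary neighborhood bookkeeping.
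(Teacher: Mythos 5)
Your proposal is correct, but it differs from the paper in that the paper offers no proof at all: Lemma~\ref{dupcodup} is simply quoted from the literature (Theorem~11.3.3 of the Brandst\"adt--Le--Spinrad survey), whereas you supply an actual argument. Your ($\Leftarrow$) direction is exactly right and complete: the four open neighborhoods and the four closed neighborhoods of an induced $P_4$ are pairwise distinct, so $P_4$ itself witnesses the failure of the twin condition. Your ($\Rightarrow$) direction is also sound, resting on the union--join dichotomy (every cograph on at least two vertices is disconnected or has disconnected complement), the closure of cographs under complementation (since $\overline{P_4}\cong P_4$) and under taking induced subgraphs, and the correct observation via $N_{\overline G}(u)=V(G)\setminus N_G[u]$ that duplicates and coduplicates swap roles under complementation. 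The one load-bearing external input is the dichotomy itself, which you rightly flag; it is Seinsche's theorem (reference [se] in this paper's bibliography) and is not circular with the statement being proved, though a fully self-contained treatment would need a page or so to derive it from $P_4$-freeness. In short: where the paper buys the lemma off the shelf, you reduce it to a different off-the-shelf characterization by an elementary and correct induction, which is a reasonable and arguably more informative route.
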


Now we are in the position to state and prove our new characterization of cographs.
We remark that the {\em only if} part of this was independently
proven in \cite{mt} using a different argument.

\begin{thm}\label{(-1,0)} A graph $G$ is a cograph if and only if no induced subgraph of $G$ has an eigenvalue in the interval $(-1,0)$.
\end{thm}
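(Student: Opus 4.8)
The plan is to prove the two implications separately; the reverse implication is short, while the forward one carries the real content. Throughout I write $\phi_G(\la)=\mathbf 1^\top (A(G)-\la I)^{-1}\mathbf 1$ for the ``walk resolvent'' of a graph $G$, where $\mathbf 1$ is the all-ones vector.

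For the reverse direction I argue by contraposition. If $G$ is not a cograph then, by definition, $G$ contains an induced path $P_4$. A direct computation gives the eigenvalues of $P_4$ as $\frac{\pm1\pm\sqrt5}{2}$; in particular $\frac{1-\sqrt5}{2}\approx-0.618$ lies in $(-1,0)$. Thus the induced subgraph $P_4$ already has an eigenvalue in $(-1,0)$, which is exactly what is required.

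For the forward direction, since every induced subgraph of a cograph is again a cograph, it suffices to show that no cograph has an eigenvalue in $(-1,0)$. I would induct along the recursive construction of cographs: every cograph with at least two vertices is either a disjoint union $G_1\cup G_2$ or a join $G_1\vee G_2$ of two smaller nonempty cographs. The disjoint union is harmless, since its spectrum is the union of the two spectra. The join is the crux: writing $A(G)=\left(\begin{smallmatrix}A_1 & J\\ J^\top & A_2\end{smallmatrix}\right)$, where $J$ is the all-ones block, and testing a hypothetical eigenvector $(\x,\y)$ for an eigenvalue $\la\in(-1,0)$, the fact that $\la\notin\mathrm{spec}(A_1)\cup\mathrm{spec}(A_2)$ (by the inductive hypothesis) lets me solve for $\x,\y$ in terms of the scalars $\mathbf 1^\top\x$ and $\mathbf 1^\top\y$, and collapse the whole eigenvalue equation to the single scalar condition $\phi_1(\la)\,\phi_2(\la)=1$, where $\phi_i(\la)=\mathbf 1^\top(A_i-\la I)^{-1}\mathbf 1$.

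The main obstacle is that ``no eigenvalue in $(-1,0)$'' does not, by itself, survive the join step: for arbitrary symmetric matrices whose spectra avoid $(-1,0)$, the join can acquire an eigenvalue there (already $M_1=M_2=\mathrm{diag}(0,-1)$ yields the join eigenvalue $\frac{-3+\sqrt5}{2}$), so the cograph structure is genuinely needed. The remedy is to strengthen the induction hypothesis and carry along the auxiliary invariant $\phi_G(\la)>1$ for all $\la\in(-1,0)$. With this in hand the join step closes: since $\phi_1,\phi_2>1$ we get $\phi_1\phi_2>1\neq1$, which rules out any eigenvalue of $G_1\vee G_2$ in $(-1,0)$; and the invariant itself propagates through the identity $\phi_G-1=-\frac{(\phi_1-1)(\phi_2-1)}{1-\phi_1\phi_2}$, whose right-hand side is positive because both numerator factors are positive while the denominator is negative. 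The base case $G=K_1$ gives $\phi_G(\la)=-1/\la>1$, and the disjoint-union case gives $\phi_G=\phi_1+\phi_2>2$, so the invariant holds at every stage. The genuinely hard part is therefore conceptual rather than computational: recognizing that the correct quantity to track is the resolvent $\phi_G$ together with the sharp threshold $\phi_G>1$. Once the statement is strengthened in this way the interlacing machinery is not even needed.
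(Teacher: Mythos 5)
Your proof is correct, but it takes a genuinely different route from the paper's. The paper proves the forward direction by inducting on the order of $G$: it picks a duplicate (or coduplicate) pair, which every cograph possesses by Lemma~\ref{dupcodup}, deletes one vertex of the pair, and combines interlacing with the observation that the deletion preserves rank, so $\mul(0,G)=\mul(0,H)+1$ (resp.\ for $-1$); this forces the interlaced eigenvalues to lock onto $0$ (resp.\ $-1$) and leaves no room for an eigenvalue in $(-1,0)$. You instead induct along the cotree decomposition (disjoint union / join), and your key move is to strengthen the induction hypothesis to the resolvent inequality $\phi_G(\la)=\mathbf 1^\top(A(G)-\la I)^{-1}\mathbf 1>1$ on all of $(-1,0)$. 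I checked the details: the eigenvector reduction in the join step correctly collapses to the scalar condition $\phi_1(\la)\phi_2(\la)=1$ (the case $\mathbf 1^\top\x=\mathbf 1^\top\y=0$ forces $\x=\y=\mathbf 0$ because $\la$ avoids both spectra), the identity $\phi_G-1=-(\phi_1-1)(\phi_2-1)/(1-\phi_1\phi_2)$ is correct, and your example $\mathrm{diag}(0,-1)$ rightly shows that the naive hypothesis does not survive the join, so the strengthening is genuinely needed. What each approach buys: the paper's argument builds the (co)duplication and interlacing machinery that it reuses for Theorems~\ref{redcored} and~\ref{mult}, whereas yours is self-contained, dispenses with interlacing entirely, and delivers a stronger quantitative conclusion (a uniform certificate $\phi_G>1$ on the whole interval rather than mere absence of eigenvalues). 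Your reverse direction is essentially the paper's but slightly more economical, since you only need that $P_4$ has an eigenvalue in $(-1,0)$, not that it is the unique $4$-vertex graph with this property.
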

\begin{proof}{Among the graphs on four vertices, the 4-vertex path is the only one with an eigenvalue in the interval $(-1,0)$ (see \cite[p.~17]{bh}).
This implies that if $G$ has no induced subgraph on four vertices with an eigenvalue in $(-1,0)$, then $G$ is a cograph.

Conversely, assume that $G$ is a cograph.
Since any induced subgraph of a cograph is also a cograph, it suffices to prove the assertion for $G$ itself.
We proceed by induction on $n$, the order of $G$.
The assertion holds if $n\le3$ as no graph with $n\le3$ vertices has an eigenvalue in $(-1,0)$. Let $n\ge4$.
By Lemma~\ref{dupcodup}, $G$ has either a pair of duplicates or a pair of coduplicates.

First assume that $G$  has a pair of duplicates $u,v$ and $H=G-v$.
Let $\la_1\ge\cdots\ge\la_n$ and $\mu_1\ge\cdots\ge\mu_{n-1}$ be the eigenvalues of $G$ and $H$, respectively.
Also suppose that $\mu_t>\mu_{t+1}=\cdots=\mu_{t+j}=0>\mu_{t+j+1}$ (with possibly $j=0$).
Since $H$ is also a cograph, by the induction hypothesis, $\mu_{t+j+1}\le-1$.
By interlacing, we have $\la_{t+1}\ge0=\la_{t+2}=\cdots=\la_{t+j}=0\ge\la_{t+j+1}\ge\mu_{t+j+1}$.
Note that in $A(G)$, the rows corresponding to $u$ and $v$ are the same, so $G$ and $H$ have the same rank which means that
  $\mul(0,G)=\mul(0,H)+1=j+1$. This is  possible only if both $\la_{t+1}$ and $\la_{t+j+1}$ are zero.
On the other hand, again by interlacing, $\la_t\ge\mu_t>0$ and $-1\ge\mu_{t+j+1}\ge\la_{t+j+2}$. Hence $G$ has no eigenvalue in $(-1,0)$.

Next, let $u,v$ be a pair of coduplicates of $G$.
Suppose that $\mu_r>\mu_{r+1}=\cdots=\mu_{r+k}=-1>\mu_{r+k+1}$ (with possibly $k=0$).
Since $H$ is also a cograph, by the induction hypothesis, $\mu_r\ge0$.
By interlacing, we have $\la_{r+1}\ge-1=\la_{r+2}=\cdots=\la_{r+k}=-1\ge\la_{r+k+1}$.
Note that in the matrix $A(G)+I$, the rows corresponding to $u$ and $v$ are the same, so $A(G)+I$ and $A(H)+I$ have the same rank which means that
  $\mul(-1,G)=\mul(-1,H)+1=k+1$. This is  possible only if $\la_{r+1}=\la_{r+k+1}=-1$.
On the other hand, again by interlacing, $\la_r\ge\mu_r\ge0$ and $-1>\mu_{r+k+1}\ge\la_{r+k+2}$. So $G$ has no eigenvalue in $(-1,0)$.
}\end{proof}

As a refinement of Theorem~\ref{(-1,0)}, one may wonder that whether it is true that ``if a graph $G$ has no eigenvalues in the interval $(-1,0)$, then $G$ is a cograph.'' However, this is not true in general, for the graph $P_5$ has no eigenvalues in the interval $(-1,0)$ (see Table 1 in Appendix of \cite{cds}) but it is not a cograph.

We remark that in \cite{jtt} it was shown that threshold graphs (a subclass of cographs) have no eigenvalues in $(-1, 0)$.

\section{Multiplicity of eigenvalues}

This section deals with eigenvalue multiplicity in a cograph in relation with the duplication and coduplication classes.
While it is known that in a cograph $G$ the multiplicities of $0$ and $-1$ eigenvalues can be determined by the sizes of sibling classes (see Lemma~\ref{redcored} below),
here we show that for any eigenvalue $\la\ne0,-1$ of $G$,
  $\mul(\la,G)$  does not exceed the total number of duplication and coduplication classes.
  This in turn implies that  $\mul(\la,G)\le\mul(0,G)+\mul(-1,G)$.

 We begin with the following two lemmas.
\begin{lem}\label{NoIntersect} In any graph, any duplication class has no intersection with any coduplication class.
\end{lem}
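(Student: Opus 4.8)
The plan is to argue by contradiction and reduce everything to two elementary observations about how the loop-free condition interacts with open versus closed neighborhoods. First I would record the two basic facts that make duplicate and coduplicate vertices behave oppositely. If $N(u)=N(v)$ then, since $u\notin N(u)$, we get $u\notin N(v)$ and hence $u\not\sim v$, so any two duplicate vertices are non-adjacent. Dually, if $N[u]=N[v]$ then $u\in N[u]=N[v]$ forces $u\in N(v)$, so any two coduplicate vertices are adjacent. These opposite adjacency behaviours already indicate that a single vertex cannot simultaneously sit in a class of the first kind and a class of the second kind.

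Next I would suppose, for contradiction, that a vertex $w$ belongs both to a duplication class $D$ and to a coduplication class $C$. Because $|D|>1$ and $|C|>1$, I may pick $u\in D$ with $u\ne w$ and $x\in C$ with $x\ne w$. The observations above give $u\not\sim w$ (from $N(u)=N(w)$) and $x\sim w$ (from $N[x]=N[w]$). The crux is then to locate $u$ relative to $x$: since $x\sim w$ we have $x\in N(w)=N(u)$, whence $u\sim x$, that is $u\in N(x)$. The coduplicate relation $N[x]=N[w]$ then yields $N(x)\subseteq N[w]$, so $u\in N[w]=N(w)\cup\{w\}$; as $u\ne w$ this forces $u\in N(w)$, i.e.\ $u\sim w$, contradicting $u\not\sim w$.

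I expect no serious obstacle; the only point requiring care is the bookkeeping between open and closed neighbourhoods in the final chain of inclusions. I would also remark that the argument nowhere uses the cograph hypothesis, so the statement in fact holds for every simple graph, and I would either state it in that generality or simply note that the cograph assumption is superfluous here.
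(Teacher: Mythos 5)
Your proof is correct and follows essentially the same route as the paper's: pick a second vertex from each class through the common vertex, derive the adjacency of the coduplicate pair and the non-adjacency of the duplicate pair, and chase the neighborhood equalities to a contradiction. Your added observation that the cograph hypothesis is never used is also accurate.
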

\begin{proof}{Let $G$ be a graph and $D,C$ be a duplication and a coduplication class of $G$, respectively, such that
$v\in C\cap D$. So, there are $u\in C$ and $w\in D$ such that $N[u]=N[v]$ and $N(w)=N(v)$. As $u\ne v$, $u\in N(v)=N(w)$, which means
$u\sim w$ and thus $v\sim w$, which is a contradiction as two duplicates cannot be adjacent.
}\end{proof}

\begin{lem}\label{Class} Let $G$ be a cograph, $C$ a sibling class in $G$, $C'\subset C$, $H=G-C'$, and $u,v\in V(H)$.
\begin{itemize}
  \item[\rm(i)] If $C\ne C'\cup\{u\}$, then $u,v$ are duplication (resp., coduplication) pair in $H$ if and only if they are  duplication (resp., coduplication) pair in $G$
 \item[\rm(ii)] If  $C=C'\cup\{u\}$ and $u,v$ are duplication (resp., coduplication) pair in $H$, then $C$ is a coduplication (resp., duplication) class in $G$.
\end{itemize}
\end{lem}
\begin{proof}{ For (i), we assume that $C$ is a duplication class, the proof of the other case is similar.
If both $u,v$ belong to $C\setminus C'$ or both do not belong to $C\setminus C'$, then the assertion is obvious.
So we may assume that $u\in C\setminus C'$ and $v\not\in C$ which means that $N_G(u)\ne N_G(v)$ and $N_G[u]\ne N_G[v]$ (by Lemma~\ref{NoIntersect}).
We show that $u,v$ are neither duplicates nor coduplicates in $H$. 
If $N_H(u)=N_H(v)$, it turns out that there must be a $w\in C'$ such that  $w\in N_G(v)$ (and of course $w\not\in N_G(u)$).
This means that $v\in N_G(w)$ but $v\not\in N_G(u)$ which is a contradiction as $u$ and $w$ are duplicates in $G$.
As $C\ne C'\cup\{u\}$,  there is a vertex $z$ other than $u$ in $C\setminus C'$.
So $u$ and $z$ are duplicates in $H$. Hence, by Lemma~\ref{NoIntersect}, $u$ does not have any coduplicate in $H$, and in particular, $N_G[u]\ne N_G[v]$.

To prove (ii), if $u,v$ are duplicates in $H$,  similar to the above argument, the assumption that $C$ is a duplication class yields a contradiction.
So $C$ must be a coduplication class. The proof of the other case is similar.
}\end{proof}

\begin{remark}\label{rem} Let $\x$ be an eigenvector for eigenvalue $\la$ of a graph $G$. Then the entries of $\x$ satisfy the following equalities:
\begin{equation}\label{sumrule}
\la\x(v)=\sum_{u\sim v}\x(u),~~\hbox{for all}~v\in V(G).
\end{equation}
From this it is seen that if $\la\ne0$, then $\x$ is constant on each duplication class and if $\la\ne-1$, then $\x$ is constant on each coduplication class.
\end{remark}

Now, we are in a position to prove the main result of this section.

\begin{thm}\label{mult} In a cograph, the multiplicity of any eigenvalue except for $0,-1$   does not exceed the total number of sibling classes.
\end{thm}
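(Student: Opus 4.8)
The plan is to recast the statement as an injectivity property of a single linear map and then prove that by induction on the order, deleting one vertex of a class at a time. Fix $\la\ne0,-1$ and let $C_1,\ldots,C_r,D_1,\ldots,D_s$ be all the duplication and coduplication classes, so that the quantity to be bounded is $r+s$. Because $\la\ne0,-1$, Remark~\ref{rem} guarantees that every eigenvector $\x$ for $\la$ is constant on each $C_i$ and on each $D_j$. Hence the map
$$\Phi\colon\{\x : A(G)\x=\la\x\}\longrightarrow\mathbb{R}^{r+s},$$
sending $\x$ to the tuple of its (common) values on the $r+s$ classes, is well defined, and the inequality $\mul(\la,G)\le r+s$ is exactly the statement that $\Phi$ is injective. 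So I would reduce everything to the following claim: \emph{if a $\la$-eigenvector $\x$ vanishes on every class $C_i$ and $D_j$, then $\x=0$}.

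I would prove this claim by induction on $n=|V(G)|$. For $n\le1$ there are no classes (Lemma~\ref{dupcodup}) and the only eigenvalue is $0$, so the claim is vacuous. For $n\ge2$, Lemma~\ref{dupcodup} provides a class $C$; assume it is a duplication class (the coduplication case is identical, with $-1$ playing the role of $0$ in Remark~\ref{rem}). Choose $v\in C$ and put $H=G-v$. Since $\x$ vanishes on $C$ we have $\x(v)=0$, and a one-line check of the eigen-equation at each vertex of $H$ shows that $\x|_H$ is a $\la$-eigenvector of $H$. If I can verify that $\x|_H$ again vanishes on every class of $H$, the induction hypothesis forces $\x|_H=0$ and hence $\x=0$.

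The core of the proof — and the step I expect to be the main obstacle — is controlling how the classes mutate when $v$ is deleted, which I would track through Lemma~\ref{Class}. If $|C|\ge3$, then by Lemma~\ref{Class}(i) the duplication classes of $H$ are precisely $C\setminus\{v\}$ and the remaining $C_i$, and no new coduplicate pair can arise: if $a,b$ became coduplicate in $H$ with their closed neighborhoods differing only at $v$, then $a\sim v$, so $a$ is adjacent to all of the (at least two) vertices of $C\setminus\{v\}$, which forces $b\sim v$ and contradicts $v\notin N_G[b]$. Thus every class of $H$ sits inside a class of $G$, and $\x|_H$ vanishes on all of them. If $|C|=2$, say $C=\{u,v\}$, the deletion destroys $C$ but may create a new coduplication class — this is exactly the $P_3\to K_2$ phenomenon — so the class structure is genuinely unstable and must be handled with care.

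The crucial observation in the $|C|=2$ case is that every newly created coduplicate pair must contain $u$: if $a,b$ become coduplicate in $H$ with $N_G[a]\triangle N_G[b]=\{v\}$, then $a\sim v$, hence $a\in N_G(v)=N_G(u)$ and $u\in N_G[a]\setminus\{v\}=N_G[b]$; were $b\ne u$ this would give $b\in N_G(u)=N_G(v)$, i.e. $b\sim v$, a contradiction, so $b=u$. Consequently each class of $H$ is either inherited from $G$ (on which $\x|_H$ vanishes because $\x$ does) or newly created and containing $u$; since $\x(u)=\x(v)=0$ and $\x|_H$ is constant on each class of $H$ by Remark~\ref{rem}, it vanishes on the new classes as well. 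This closes the induction. The only delicate point throughout is this bookkeeping of class changes under a single deletion, and in particular isolating the fact that every class \emph{created} by the deletion meets the surviving vertex $u$; once that is in place the argument needs nothing beyond Lemma~\ref{dupcodup}, Lemma~\ref{Class}, and Remark~\ref{rem}.
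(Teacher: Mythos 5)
Your proof is correct, and it takes a genuinely different route from the paper's. The paper first manufactures a $\la$-eigenvector vanishing on all $\ell$ classes by deleting one vertex per class and invoking the equality case of interlacing (Lemma~\ref{eqinter}), and then kills that eigenvector by repeatedly contracting \emph{all} classes simultaneously to single vertices ($H_0,H_1,\ldots$ down to isolated vertices), noting at each stage that any newly created class contains a retained vertex on which the vector already vanishes. You replace the interlacing step with a trivial dimension count — the evaluation map $\Phi$ into $\mathbb{R}^{r+s}$ is well defined by Remark~\ref{rem}, so $\mul(\la,G)\le r+s$ follows from injectivity of $\Phi$, and non-injectivity is exactly a $\la$-eigenvector vanishing on all classes — and you replace the simultaneous contraction with a one-vertex-at-a-time induction on $n$. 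This buys elementarity (no interlacing at all) at the cost of a finer case analysis of how classes mutate under a single deletion; in particular you need, and correctly supply, the ``cross'' statements that Lemma~\ref{Class} does not literally cover: deleting one vertex $v$ of a duplication class $C$ with $|C|\ge3$ creates no new coduplicate pair (choose $z\in C\setminus\{v\}$ with $z\ne b$; then $z\sim a$ forces $z\sim b$ and hence $b\sim v$), while for $|C|=2$ every newly created pair must contain the surviving vertex $u$, on which $\x$ vanishes anyway. That last observation is the same combinatorial heart as the paper's remark that each new class in $G_i$ contains the kept vertex $v_i$; both proofs ultimately rest on it. The only imprecision in your write-up is the claim that the inequality ``is exactly'' injectivity of $\Phi$ — injectivity is sufficient, which is all you use — and, in the $|C|\ge3$ case, the phrase ``the duplication classes of $H$ are precisely $C\setminus\{v\}$ and the remaining $C_i$'' is slightly stronger than what Lemma~\ref{Class}(i) gives, but all you need is that every class of $H$ is contained in a class of $G$, which does follow.
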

\begin{proof}{Let $G$ be a cograph and $\la\ne0,-1$ be an eigenvalue of $G$.
Let  $S_1,\ldots,S_k$ be all the sibling classes of $G$.
Assume for a contradiction that $\mul(\la,G)>k$.

We claim that there is an eigenvector for $\la$ which is zero on $S_1\cup\cdots\cup S_k$.
 To see this, from each class, we pick one vertex and remove them from  $G$ to obtain $G'$.
At least $k+1$ consecutive eigenvalues of $G$, say $\la_t,\ldots,\la_{t+k}$, are all equal to $\la$.
Let $\mu_1\ge\cdots\ge\mu_{n-k}$ be the eigenvalues of $G'$. By interlacing, $\la=\la_t\ge\mu_t\ge\la_{t+k}=\la$. So we have equality in interlacing.
Then by Lemma~\ref{eqinter}, if $\y\ne\bf0$ is an eigenvector of $\la$ for $G'$, then $\x:=\begin{pmatrix}\bf0 \\ \y\end{pmatrix}$ is an eigenvector of $\la$ for $G$, where the entries of the $\bf0$ vector corresponds to the vertices removed from $G$ to obtain $G'$. Since any eigenvector for $\la$ is constant on each sibling class (by Remark~\ref{rem}), $\x$ must be zero on  $S_1\cup\cdots\cup S_k$.

Again consider $G$ where this time from each class, we keep one vertex $v_i\in S_i$, $i=1,\ldots,k$ and remove the rest of vertices of $S_i$'s.
 Let $H_0$ be the resulting graph.
 Note that by considering Equation \eqref{sumrule} for zero and nonzero entries of $\x$, it is seen that $\x$ must have at least two nonzero entries.
 Hence $H_0$ has at least two vertices.
  We claim that any sibling  class of $H_0$ contains some vertex from $\{v_1,\ldots,v_k\}$.
To see this,  consider $G_1=G-(S_1\setminus\{v_1\})$.
In view of Lemmas~\ref{NoIntersect} and \ref{Class}, either $v_1$ does not appear in any sibling class of $G_1$ or if it appears in some sibling class $R$, then
$R=\{v_1,u\}$ for some vertex $u\not\in S_2\cup\cdots\cup S_k$, or $R=S_j\cup\{v_1\}$ for some $2\le j\le k$. (Note that in the latter case  by  Lemma~\ref{Class}, $R$ and $S_j$ are both coduplicate (duplicate) classes if $S_1$ is a duplicate (coduplicate) class.)
By continuing this process in $k$ steps, we remove
\begin{equation}\label{C}(S_1\setminus\{v_1\})\cup\cdots\cup(S_{k}\setminus\{v_{k}\})\end{equation}
from $G$ to obtain $H_0$.
 The above argument shows that at the step in which $S_i\setminus\{v_i\}$ is removed, if a new sibling class was created, then it would contain $v_i$. The claim now follows.

Suppose that $R_1,\ldots,R_\ell$ are all the  sibling classes of $H_0$. After an appropriate relabeling, we may assume that $v_i\in R_i$ for $i=1,\ldots,\ell$.
Note if we remove zero entries from an eigenvector of a graph and remove the corresponding vertices from the graph, the resulting vector is an eigenvector of the same eigenvalue for the resulting graph.
Hence, if $\x_0$ is obtained from $\x$ by removing the entries corresponding with the vertices of \eqref{C}, then $\x_0$ is an eigenvector of $\la$ for $H_0$.
Since $\x_0$ is zero on each $v_i$, and  it is constant on each $R_i$ (by Remark~\ref{rem}), it must be zero on $R_1\cup\cdots\cup R_\ell$.
Again we remove
\begin{equation}\label{C'}
(R_1\setminus\{v_1\})\cup\cdots\cup(R_\ell\setminus\{v_\ell\})
\end{equation}
from $H_0$ and call the resulting graph $H_1$.
 Again, with the same arguments as above, if $\x_1$ is obtained from $\x_0$ by removing the zero entries corresponding with the vertices of \eqref{C'}, then $\x_1$ is an eigenvector of $\la$ for $H_1$.
So far we have that $\x$ is zero on $V(G)\setminus V(H_1)$.  Continuing this process we will end up with some subgraph $H_r$ which consists of some isolated vertices and
we have that $\x$ is zero on $V(G)\setminus V(H_r)$, which implies that $\x=\bf0$, a contradiction.
}\end{proof}

The following result from \cite{sa} (see also \cite{bss,mt}) shows that in a cograph, the sizes of sibling classes determine the multiplicities of $0$ and $-1$ eigenvalues. This also can be viewed as a generalization of Sillke's conjecture.

\begin{lem}\label{redcored} Let $G$ be a cograph,  $C_1,\ldots,C_r$ and $D_1,\ldots,D_s$, with $r,s\ge0$, be all the duplication and coduplication classes of $G$, respectively.
Then $$\mul(0,G)=\sum_{i=1}^r(|C_i|-1),~~\hbox{and}~~\mul(-1,G)=\sum_{i=1}^s(|D_i|-1).$$
In particular, if $G$ has no pairs of duplicates, then $G$ has no eigenvalue $0$ and if $G$ is has no pairs of coduplicates, then $G$ has no  eigenvalue $-1$.
\end{lem}
Since each sibling class has at least two vertices, Lemma~\ref{redcored} implies that in a cograph $G$, $\mul(0,G)+\mul(-1,G)$ is not smaller than the total number of sibling classes.
 So we have the following.

\begin{cor} In any cograph $G$, the multiplicity of any eigenvalue does not exceed $\mul(0,G)+\mul(-1,G)$.
\end{cor}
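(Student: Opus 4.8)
The statement follows almost immediately by combining Theorem~\ref{mult} with Theorem~\ref{redcored}, so the plan is short. First I would dispose of the two exceptional eigenvalues: if $\la=0$ or $\la=-1$, then $\mul(\la,G)$ is itself one of the two nonnegative summands on the right-hand side, so the inequality $\mul(\la,G)\le\mul(0,G)+\mul(-1,G)$ holds trivially. Thus it remains to treat an eigenvalue $\la\ne0,-1$, which is exactly the range to which Theorem~\ref{mult} applies.

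For such $\la$, let $C_1,\ldots,C_r$ and $D_1,\ldots,D_s$ be all the duplication and coduplication classes of $G$, and set $\ell=r+s$. By Lemma~\ref{NoIntersect} these two families are disjoint, so no set is double-counted and $\ell$ is genuinely the total number of (co)duplication classes figuring in Theorem~\ref{mult}; hence $\mul(\la,G)\le\ell$. On the other hand, Theorem~\ref{redcored} gives $\mul(0,G)=\sum_{i=1}^{r}(|C_i|-1)$ and $\mul(-1,G)=\sum_{j=1}^{s}(|D_j|-1)$. Since every (co)duplication class has at least two vertices by definition, each of the $r+s$ summands is at least $1$, whence $\mul(0,G)+\mul(-1,G)\ge r+s=\ell$. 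Chaining the two bounds yields $\mul(\la,G)\le\ell\le\mul(0,G)+\mul(-1,G)$, as required.

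There is no real obstacle here; the substantive content is entirely in Theorem~\ref{mult}, and this corollary is a clean extraction of its consequences. The only points that need care are bookkeeping ones: invoking Lemma~\ref{NoIntersect} so that the duplication and coduplication classes are counted without overlap and $\ell=r+s$ is legitimate, and observing that each class, having size at least two, contributes at least $1$ to the relevant multiplicity. Finally, the exceptional values $\la\in\{0,-1\}$ must be singled out as in the first paragraph, since Theorem~\ref{mult} explicitly excludes them and the bound there is only against $\ell$ rather than directly against $\mul(0,G)+\mul(-1,G)$.
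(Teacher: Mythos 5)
Your proof is correct and follows essentially the same route as the paper: combine Theorem~\ref{mult} with the identity from Theorem~\ref{redcored}, noting that each of the $\ell$ (co)duplication classes has size at least two so that $\mul(0,G)+\mul(-1,G)=\sum(|C_i|-1)\ge\ell$. Your explicit handling of the trivial cases $\la\in\{0,-1\}$ and the appeal to Lemma~\ref{NoIntersect} to justify $\ell=r+s$ are details the paper leaves implicit, but the argument is the same.
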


\section{Chain of neighborhoods}

In this section we first introduce an equivalence relation on the vertices of a graph, and then on the set of equivalence classes, we introduce a partial order
in terms of open/closed neighborhoods of vertices.
We conjecture that the multiplicity of  eigenvalues of a cograph except for $0,-1$ is bounded above by the maximum size of an antichain with respect to this partial order.  We show that the conjecture is true in two extremal cases.

Let $G$ be a graph and consider the following relation on $V(G)$:
 $$u\equiv v~~\hbox{if and only if}~\left\{\begin{array}{ll}N[u]=N[v]&\hbox{if $u\sim v$,}\\N(u)=N(v)&\hbox{if $u\not\sim v$.}\end{array}\right.$$
Then `$\equiv$' is an equivalence relation on $V(G)$. It is trivially reflexive and symmetric.
To check the transitivity, let $u\equiv v$ and $v\equiv w$. If either $u\sim v,v\sim w$ or $u\not\sim v,v\not\sim w$, then obviously $u\equiv w$.
Hence, without loss of generality, we are left with the case  $u\sim v,v\not\sim w$. Then we have $N[u]=N[v]$ and $N(v)=N(w)$.
So $u\in N(v)=N(w)$ which in turn implies that  $w\in N[u]=N[v]$, which is a contradiction.

The equivalence relation `$\equiv$' partitions $V(G)$ into equivalence classes. In fact each equivalence class
is either a set of a single vertex, or a sibling class.
We pick one representative from each equivalence class and denote the resulting set by $G/\!\equiv$.
On $G/\!\equiv$ we define the following relation:
 $$u\leqslant v~~\hbox{if and only if}~\left\{\begin{array}{ll}N[u]\subset N[v]&\hbox{if $u\sim v$,}\\N(u)\subset N(v)&\hbox{if $u\not\sim v$,}\end{array}\right.$$
or equivalently
$$u\leqslant v~~\hbox{if and only if}~~N(u)\subset N[v].$$
We observe that `$\leqslant$' is a partial order on $G/\!\equiv$. We only need to check the transitivity.
Assume that $u\leqslant v$ and $v\leqslant w$. Again if  either $u\sim v,v\sim w$ or $u\not\sim v,v\not\sim w$, then obviously $u\leqslant  w$.
Two cases are left to check. First, $u\not\sim v$ and $v\sim w$. So, we have $N(u)\subset N(v)$ and $N[v]\subset N[w]$, implying that
$N(u)\subset N[w]$, i.e. $u\leqslant w$. Second,   $u\sim v$ and $v\not\sim w$. So, we have $N[u]\subset N[v]$ and $N(v)\subset N(w)$.
Hence, $u\in N(v)\subset N(w)$, and thus $w\in N(u)\subset N[v]$ which means $v\sim w$, a contradiction.

Now consider the chains and antichains of $G/\!\equiv$ with respect to the partial order `$\leqslant $'.
Note that by Dilworth's theorem the minimum
number of disjoint chains needed to partition $G/\!\equiv$ is equal to  the maximum size of an
antichain of $G/\!\equiv$.
If $G/\!\equiv$ contains only antichains of size 1, then `$\leqslant $' is a total order, and all the elements of $G/\!\equiv$ belong to a chain, say
$v_1\leqslant v_2\leqslant \cdots\leqslant v_r$. As the above argument shows $v_i\sim v_{i+1}\not\sim v_{i+2}$ is impossible for any $i$.
 So  there must exist some $1\le j\le r$ such that $v_1\not\sim\cdots\not\sim v_j\sim\cdots\sim v_r$.
It turns out that $G$ is a split graph as the vertices equivalent with any of $v_1,\ldots,v_j$ form an independent set and
  the vertices equivalent with any of $v_{j+1},\ldots,v_r$ form a clique.
  It is known that a split graph such that the neighborhoods of its vertices form a `chain' as above, is a threshold graph
(see \cite[Theorem 1.2.4]{mp}).

In the case that `$\leqslant $' is a total order on $G/\!\equiv$, i.e.  $G/\!\equiv$ itself is  chain,
  a strong constraint is imposed on  eigenvalues  multiplicities: any eigenvalue $\la\ne0,-1$ of a threshold graphs is simple (\cite{jtt0}).
As an extension of this result, we conjecture that in general there would be a relation between the chains in $G/\!\equiv$ and eigenvalues multiplicities.

\begin{conj} For any cograph $G$, the multiplicity of any eigenvalue $\la\ne0,-1$,
does not exceed the minimum number of chains into which $G/\!\equiv$ can be partitioned under
the partial order `$\leqslant $'.
\end{conj}

The aformentioned result of \cite{jtt0} on multiplicity of eigenvalues $\la\ne0,-1$ in threshold graphs shows that the conjecture is true in the extreme case that  $G/\!\equiv$ is a chain.
In the other extreme that $G/\!\equiv$ is an antichain, the conjecture follows from Theorem~\ref{mult}.
The validity of the conjecture has also been confirmed for cographs up to 13 vertices by a computer search.
Cographs up to 13 vertices can be constructed making use of the inductive structure of cographs and then by isomorphism rejection using {\tt nauty} \cite{mckay}.

\section{$P_5$-free graphs}
In this section we consider $P_5$-free graphs as a natural extension of cographs. These are the graphs with no induced subgraph isomorphic to $P_5$.
The family of $P_5$-free graphs have attracted noticeable interest among researchers, see for instance \cite{bt,lz,mm,rs,sch}.

Similar to cographs, there are eigenvalue-free intervals for a large subclass of $P_5$-graphs.
The following theorem essentially was first proved in \cite{aas} where it was shown that chain graphs (i.e., bipartite graphs in which the neighborhoods of vertices in each color class form a chain with respect to inclusion) have no eigenvalues in the interval $(0,1/2)$ (and hence no eigenvalue in the interval $(-1/2,0)$, as the eigenvalues of bipartite graphs are symmetric with respect to zero). However, as shown in the proof of the following theorem, connected bipartite $P_5$-free graphs are the same as chain graphs. Here we give a simple proof for this result.

\begin{thm}\label{P5} {\rm(\cite{aas})} Bipartite $P_5$-free graphs have no eigenvalue in the intervals $(-1/2,0)$ and $(0,1/2)$.
\end{thm}
\begin{proof}{Eigenvalues of bipartite graphs are symmetric with respect to  0. So is suffices to prove that bipartite $P_5$-free graphs have no eigenvalue in $(0,1/2)$.
The proof goes by induction on the number of vertices. The assertion holds for bipartite graphs with at most 4 vertices (see \cite[p.~17]{bh}). It suffices to consider connected graphs. So let $G$ be a connected bipartite $P_5$-free graph with at least 5 vertices. We claim that the neighborhoods of vertices in the same part are comparable with respect to inclusion; otherwise $G$ contains an induced matching of size 2.
As $G$ is connected, there exits an induced path of length at least $2$ between these two edges which implies the existence of an induced path of length at least $5$ in $G$, a contradiction.

First assume that $G$  has a pair of duplicates $u,v$ and $H=G-v$.
Let $\la_1\ge\cdots\ge\la_\ell$ and $\mu_1\ge\cdots\ge\mu_{\ell-1}$ be the eigenvalues of $G$ and $H$, respectively.
Also suppose that $\mu_t>\mu_{t+1}=\cdots=\mu_{t+j}=0>\mu_{t+j+1}$ (with possibly $j=0$).
By the induction hypothesis, $\mu_t>1/2$ (the equality is impossible as $1/2$ is not an algebraic integer).
By interlacing, we have $\la_{t+1}\ge0=\la_{t+2}=\cdots=\la_{t+j}=0\ge\la_{t+j+1}\ge\mu_{t+j+1}$.
Note that $\mul(0,G)=\mul(0,H)+1=j+1$. This is  possible only if both $\la_{t+1}$ and $\la_{t+j+1}$ are zero.
On the other hand, again by interlacing, $\la_t\ge\mu_t>1/2$. Hence $G$ has no eigenvalue in $(0,1/2)$.

Now, suppose that $G$ has no pair of duplicates.
Let $\{x_1,\ldots,x_n\}$ and $\{y_1,\ldots,y_m\}$ be the parts of $G$.
We showed that the neighborhoods of vertices in each part of $G$ form a chain. This is only possible if the parts of $G$ have the same size $m=n$,
 and the neighborhoods of the vertices are of the form
 $$\{x_1\},\{x_1,x_2\},\ldots,\{x_1,\ldots,x_n\},~~\{y_1\},\{y_1,y_2\},\ldots,\{y_1,\ldots,y_n\}.$$
 It follows that
 $$A(G)=\begin{pmatrix}O&C\\C^\top&O\end{pmatrix},$$
 with $C+C^\top=J_n+I_n$ where  $J_n$ is the all 1's $n\times n$ matrix.
 We have that
$$(2C-I)(2C-I)^\top=4CC^\top-2C-2C^\top+I=4CC^\top-I-2J.$$
This means that $4CC^\top-I=(2C-I)(2C-I)^\top+2J$ is positive semidefinite and so the eigenvalues of $CC^\top$ are not smaller than $1/4$. It turns out that $G$ has no eigenvalue in the interval $(-1/2,1/2)$. This completes the proof.
}\end{proof}

The assumption of `bipartiteness' cannot be removed from Theorem~\ref{P5}. The graph depicted in Figure~\ref{fig}, is non-bipartite and $P_5$-free with an eigenvalue about $-0.462$ (see Table 1 in Appendix of \cite{cds}).
\begin{figure}
  \centering\includegraphics[width=3.5cm]{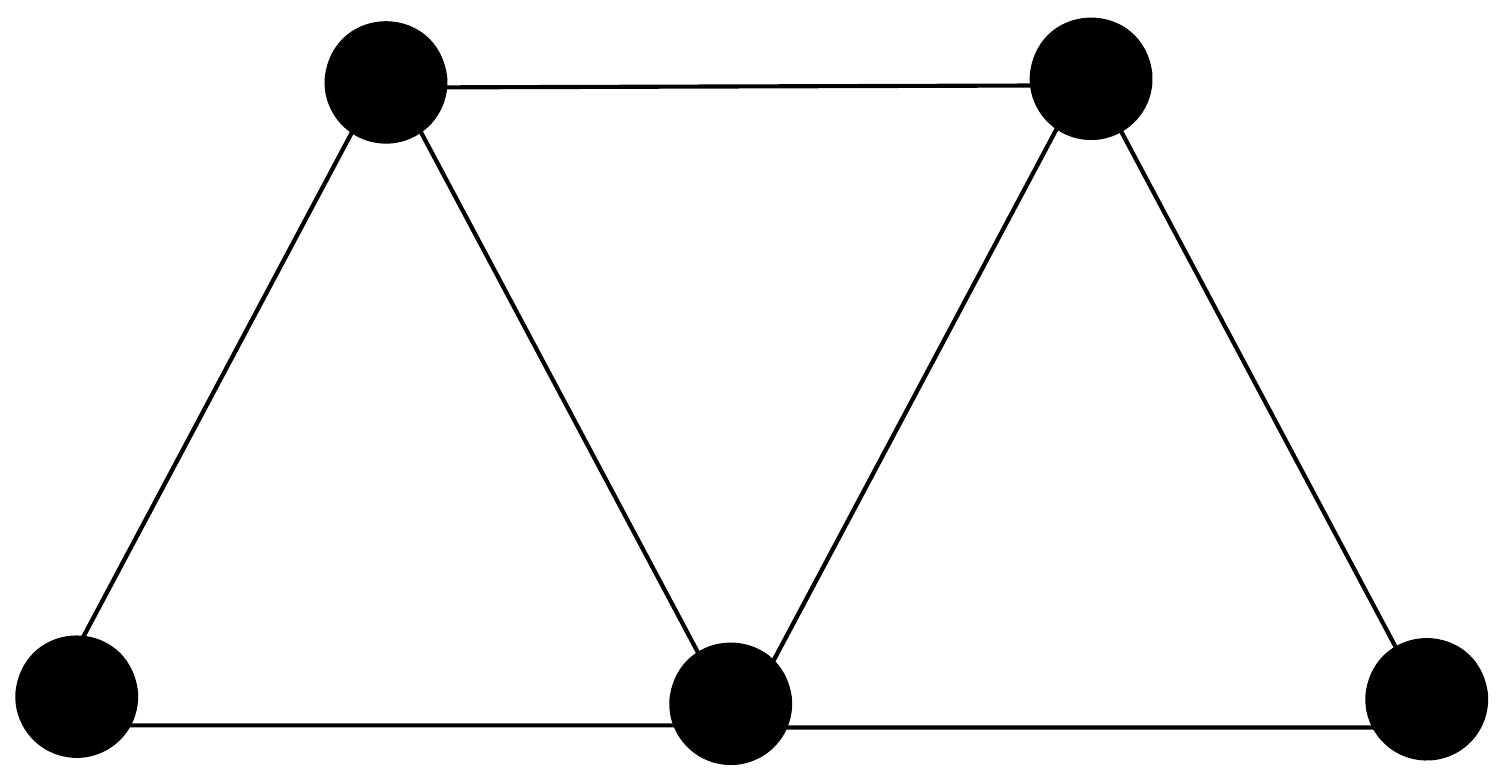}\\
  \caption{A non-bipartite $P_5$-free graph with an eigenvalue in the interval $(-1/2,0)$}\label{fig}
\end{figure}

Also there is no characterization similar to Theorem~\ref{(-1,0)}  for bipartite $P_5$-free graphs. For this, consider $C_6$, the cycle of order 6. It is not $P_5$-free, however it is easy to verify (see Tables 1 and 4 in Appendix of \cite{cds}) that any induced subgraph of $C_6$ has no eigenvalue in the interval $(0,1/2)$.

\section*{Acknowledgments}
The research of the author was in part supported by a grant from IPM (No. 94050114).
He is also indebted to Jack Koolen for fruitful discussion and critical remarks.

{}

\end{document}